\begin{document}
\title{Weak type estimates of Marcinkiewicz integrals on the weighted Hardy and Herz-type Hardy spaces}
\author{Hua Wang\,\footnote{E-mail address: wanghua@pku.edu.cn.}\\
\footnotesize{School of Mathematical Sciences, Peking University, Beijing 100871, China}}
\date{}
\maketitle

\begin{abstract}
The Marcinkiewicz integral is essentially a Littlewood-Paley $g$-function, which plays a important role in harmonic analysis. In this article, by using the atomic decomposition theory of weighted Hardy spaces and homogeneous weighted Herz-type Hardy spaces, we will obtain some weighted weak type estimates for Marcinkiewicz integrals on these spaces.\\
\textit{MSC(2000):} 42B25; 42B30\\
\textit{Keywords:} Marcinkiewicz integrals; weighted Hardy spaces; weighted Herz-type Hardy spaces; $A_p$ weights; atomic decomposition
\end{abstract}

\section{Introduction}

Suppose that $S^{n-1}$ is the unit sphere in $\mathbb R^n$($n\ge2$) equipped with the normalized Lebesgue measure $d\sigma$. Let $\Omega\in L^1(S^{n-1})$ be homogeneous of degree zero and satisfy the cancellation condition
$$\int_{S^{n-1}}\Omega(x')\,d\sigma(x')=0,$$
where $x'=x/{|x|}$ for any $x\neq0$.
Then the Marcinkiewicz integral of higher dimension $\mu_\Omega$ is defined by
$$\mu_\Omega(f)(x)=\left(\int_0^\infty\big|F_{\Omega,t}(x)\big|^2\frac{dt}{t^3}\right)^{1/2},$$
where
$$F_{\Omega,t}(x)=\int_{|x-y|\le t}\frac{\Omega(x-y)}{|x-y|^{n-1}}f(y)\,dy.$$

This operator $\mu_\Omega$ was first introduced by Stein in [14]. He proved that if $\Omega\in Lip_\alpha(S^{n-1})$($0<\alpha\le1$), then $\mu_\Omega$ is of type $(p,p)$ for $1<p\le2$ and of weak type $(1,1)$. It is well known that the Littlewood-Paley $g$-function is a very important tool in harmonic analysis and the Marcinkiewicz integral is essentially a Littlewood-Paley $g$-function. Therefore, many authors has been interested in studying the boundedness properties of $\mu_\Omega$ on various function spaces, we refer the readers to see [1,2,3,7,9,16] for its developments and applications.

In 1990, Torchinsky and Wang [16] showed the following result.

\newtheorem*{thmA}{Theorem A}
\begin{thmA}
Let $\Omega\in Lip_\alpha(S^{n-1})$, $0<\alpha\le1$. If $w\in A_p$$($Muckenhoupt weight class$)$, $1<p<\infty$, then there exists a constant $C$ independent of $f$ such that
$$\|\mu_\Omega(f)\|_{L^p_w}\le C\|f\|_{L^p_w}.$$
\end{thmA}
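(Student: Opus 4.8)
The plan is to realize $\mu_\Omega$ as a Calder\'on--Zygmund singular integral operator with values in a Hilbert space and then to invoke the weighted theory for such operators. Put $\mathcal H=L^2\big((0,\infty),\,dt/t^3\big)$, and for $x\neq 0$ let $\vec K(x)=\big(K_t(x)\big)_{t>0}$ with $K_t(x)=\frac{\Omega(x)}{|x|^{n-1}}\,\mathbf{1}_{\{|x|\le t\}}$. Since $F_{\Omega,t}(x)=\int_{\mathbb R^n}K_t(x-y)f(y)\,dy$, we may write $\mu_\Omega(f)(x)=\big\|(\vec K*f)(x)\big\|_{\mathcal H}$, the convolution being taken in the $x$ variable. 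A short computation gives $\|\vec K(x)\|_{\mathcal H}=\frac{|\Omega(x')|}{|x|^{n-1}}\big(\int_{|x|}^\infty dt/t^3\big)^{1/2}=\frac{|\Omega(x')|}{\sqrt 2\,|x|^n}$, so $\vec K$ obeys the standard size bound $\|\vec K(x)\|_{\mathcal H}\le C|x|^{-n}$.

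Next I would establish the unweighted estimate $\|\mu_\Omega(f)\|_{L^2}\le C\|f\|_{L^2}$. Using the homogeneity of $\Omega$ and the cancellation condition, set $\phi(y)=\frac{\Omega(y)}{|y|^{n-1}}\mathbf{1}_{\{|y|\le 1\}}$; then $\int_{\mathbb R^n}\phi=0$, $K_t(x)=t^{-(n-1)}\phi(x/t)$, and $F_{\Omega,t}=t\,(f*\phi_t)$ with $\phi_t(y)=t^{-n}\phi(y/t)$, so that $\mu_\Omega(f)=\big(\int_0^\infty|f*\phi_t|^2\,dt/t\big)^{1/2}$ is precisely a Littlewood--Paley $g$-function. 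By Plancherel and Fubini, $\|\mu_\Omega(f)\|_{L^2}^2=\int_{\mathbb R^n}|\widehat f(\xi)|^2\big(\int_0^\infty|\widehat{\phi}(t\xi)|^2\,dt/t\big)\,d\xi$, so the bound reduces to the uniform estimate $\sup_{\xi'\in S^{n-1}}\int_0^\infty|\widehat\phi(s\xi')|^2\,ds/s\le C$. This follows from $|\widehat\phi(\zeta)|\lesssim|\zeta|$ for small $\zeta$ (because $\phi$ has mean zero and finite first moment) together with the decay $|\widehat\phi(\zeta)|\lesssim|\zeta|^{-\delta}$ for large $\zeta$, the latter being where the hypothesis $\Omega\in Lip_\alpha(S^{n-1})$ enters.

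Then I would verify the H\"ormander-type regularity condition $\int_{|x|>2|y|}\|\vec K(x-y)-\vec K(x)\|_{\mathcal H}\,dx\le C$, uniformly in $y$. Decompose $K_t(x-y)-K_t(x)$ into $\big(\frac{\Omega(x-y)}{|x-y|^{n-1}}-\frac{\Omega(x)}{|x|^{n-1}}\big)\mathbf{1}_{\{|x-y|\le t\}}$ plus $\frac{\Omega(x)}{|x|^{n-1}}\big(\mathbf{1}_{\{|x-y|\le t\}}-\mathbf{1}_{\{|x|\le t\}}\big)$. For $|x|>2|y|$ one has $|x-y|\approx|x|$, and integrating $dt/t^3$ over $t\ge|x-y|$ bounds the $\mathcal H$-norm of the first piece by $\frac{C}{|x|}\big|\frac{\Omega(x-y)}{|x-y|^{n-1}}-\frac{\Omega(x)}{|x|^{n-1}}\big|\lesssim\frac{|y|^\alpha}{|x|^{n+\alpha}}$, the classical estimate for $Lip_\alpha$ symbols. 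The second piece vanishes unless $t$ lies between $|x|$ and $|x-y|$, an interval of length $\le|y|$ situated at height $\approx|x|$, so its $\mathcal H$-norm is at most $\frac{C}{|x|^{n-1}}\big(|y|/|x|^3\big)^{1/2}=C|y|^{1/2}|x|^{-n-1/2}$. Both contributions are integrable over $\{|x|>2|y|\}$ with a bound independent of $y$. I expect this truncation-boundary term to be the one delicate point: one must observe that the scales $t$ on which the balls $\{|x-y|\le t\}$ and $\{|x|\le t\}$ disagree form a short interval located far from the origin, which is exactly what supplies the $|y|^{1/2}$ gain.

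Finally, with $L^2$-boundedness and the H\"ormander condition in hand, the weighted Calder\'on--Zygmund theory for Hilbert-space-valued convolution operators, combined with the Coifman--Fefferman / Rubio de Francia weighted machinery, yields boundedness of the linear operator $f\mapsto\vec K*f$ on $L^p_w(\mathbb R^n)$ for every $1<p<\infty$ and every $w\in A_p$ (one may alternatively prove the $L^2_w$ bound for $w\in A_2$ first and then extrapolate). Since these weighted norm inequalities transfer verbatim to $\mu_\Omega(f)=\|\vec K*f\|_{\mathcal H}$, we obtain $\|\mu_\Omega(f)\|_{L^p_w}\le C\|f\|_{L^p_w}$ with $C$ independent of $f$, which is the assertion of Theorem A.
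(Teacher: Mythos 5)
Theorem A is not proved in this paper at all: it is imported from Torchinsky and Wang [16] and used as a black box, so there is no internal proof to compare your argument against. On its own merits, your proposal is the standard (and correct) modern proof: realize $\mu_\Omega(f)(x)=\|(\vec K*f)(x)\|_{\mathcal H}$ with $\mathcal H=L^2((0,\infty),dt/t^3)$, get the $L^2$ bound by Plancherel from the cancellation of $\Omega$ (small frequencies) and the $Lip_\alpha$ regularity (large frequencies), verify the kernel regularity, and invoke the weighted theory for Hilbert-space-valued Calder\'on--Zygmund convolution operators. Your identity $\mu_\Omega(f)=\big(\int_0^\infty|f*\phi_t|^2\,dt/t\big)^{1/2}$ with $\phi(y)=\Omega(y)|y|^{-(n-1)}\mathbf{1}_{\{|y|\le1\}}$ is exactly the reformulation the paper itself uses in Section 4, and your splitting of $K_t(x-y)-K_t(x)$ into a ``smoothness of $|x|^{-(n-1)}$'' piece, a ``$Lip_\alpha$ of $\Omega$'' piece, and a truncation-boundary piece mirrors the estimates (1), (2) and the observation $t\ge|x-x_0|/2$ in the paper's proof of Theorem 1. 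The computation of the boundary term, giving the gain $|y|^{1/2}|x|^{-n-1/2}$, is right and is precisely the source of the exponent $\beta=\min\{\alpha,1/2\}$ quoted in the introduction.

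One point should be stated more carefully in the last step. You feed ``$L^2$-boundedness plus the H\"ormander condition'' into the weighted Calder\'on--Zygmund machinery, but the integral H\"ormander condition alone is known to be insufficient for $A_p$-weighted bounds (it does give unweighted $L^p$ and weak $(1,1)$, but there are operators with H\"ormander kernels failing every weighted norm inequality). What actually makes the Coifman--Fefferman sharp-function/good-$\lambda$ argument work is the \emph{pointwise} regularity
$$\|\vec K(x-y)-\vec K(x)\|_{\mathcal H}\le C\Big(\frac{|y|^{\alpha}}{|x|^{n+\alpha}}+\frac{|y|^{1/2}}{|x|^{n+1/2}}\Big)\le C\,\frac{|y|^{\beta}}{|x|^{n+\beta}},\qquad |x|>2|y|,\ \ \beta=\min\{\alpha,1/2\},$$
which your computation in fact establishes before you integrate it. So the proof is complete once you cite the pointwise H\"older-type kernel estimate, rather than only its integrated consequence, as the hypothesis of the weighted theorem; with that adjustment the argument is sound and yields Theorem A for all $1<p<\infty$ and $w\in A_p$.
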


Assume that $\Omega$ satisfies the same conditions as above, in [2] and [7], the authors proved the $H^p_w$-$L^p_w$ boundedness of Marcinkiewicz integrals provided that $\frac{n}{n+\beta}<p<1$ and $w\in A_{p(1+\beta/n)}$, where $\beta=\min\{\alpha,1/2\}$. The main purpose of this paper is to discuss the weak type estimate of $\mu_\Omega$ on the weighted Hardy spaces $H^p_w(\mathbb R^n)$ when $p=\frac{n}{n+\alpha}$ and $w\in A_1$. In the meantime, the corresponding weak type estimate of $\mu_\Omega$ on the homogeneous weighted Herz-type Hardy spaces $H\dot K^{\alpha,p}_q(w_1,w_2)$ is also given. We now state our main results as follows.

\newtheorem{thm1}{Theorem}
\begin{thm1}
Let $0<\alpha<1$ and $\Omega\in Lip_\alpha(S^{n-1})$. If $p=\frac{n}{n+\alpha}$, $w\in A_1$, then there exists a constant $C>0$ independent of $f$ such that
$$\|\mu_\Omega(f)\|_{WL^p_w}\le C\|f\|_{H^p_w}.$$
\end{thm1}

\begin{thm1}
Let $0<\beta<1$ and $\Omega\in Lip_\beta(S^{n-1})$. If $0<p\le1$, $1<q<\infty$, $\alpha=n(1-1/q)+\beta$, $w_1,w_2\in A_1$, then there exists a constant $C$ independent of $f$ such that
$$\|\mu_\Omega(f)\|_{W\dot K^{\alpha,p}_q(w_1,w_2)}\le C\|f\|_{H\dot K^{\alpha,p}_q(w_1,w_2)}.$$
\end{thm1}

\section{Notations and definitions}

First, let's recall some standard definitions and notations. The classical $A_p$ weight theory was first introduced by Muckenhoupt in the study of weighted
$L^p$ boundedness of Hardy-Littlewood maximal functions in [13].
Let $w$ be a nonnegative, locally integrable function defined on $\mathbb R^n$, all cubes are assumed to have their sides parallel to the coordinate axes.
We say that $w\in A_p$, $1<p<\infty$, if
$$\left(\frac1{|Q|}\int_Q w(x)\,dx\right)\left(\frac1{|Q|}\int_Q w(x)^{-\frac{1}{p-1}}\,dx\right)^{p-1}\le C \quad\mbox{for every cube}\; Q\subseteq \mathbb
R^n,$$
where $C$ is a positive constant which is independent of the choice of $Q$.

For the case $p=1$, $w\in A_1$, if
$$\frac1{|Q|}\int_Q w(x)\,dx\le C\cdot\underset{x\in Q}{\mbox{ess\,inf}}\,w(x)\quad\mbox{for every cube}\;Q\subseteq\mathbb R^n.$$

A weight function $w$ is said to belong to the reverse H\"{o}lder class $RH_r$ if there exist two constants $r>1$ and $C>0$ such that the following reverse H\"{o}lder inequality holds
$$\left(\frac{1}{|Q|}\int_Q w(x)^r\,dx\right)^{1/r}\le C\left(\frac{1}{|Q|}\int_Q w(x)\,dx\right)\quad\mbox{for every cube}\; Q\subseteq \mathbb R^n.$$

It is well known that if $w\in A_p$ with $1<p<\infty$, then $w\in A_r$ for all $r>p$, and $w\in A_q$ for some $1<q<p$. We thus write $q_w\equiv\inf\{q>1:w\in A_q\}$ to denote the critical index of $w$. If $w\in A_p$ with $1\le p<\infty$, then there exists $r>1$ such that $w\in RH_r$.

Given a cube $Q$ and $\lambda>0$, $\lambda Q$ denotes the cube with the same center as $Q$ whose side length is $\lambda$ times that of $Q$. $Q=Q(x_0,r_Q)$ denotes the cube centered at $x_0$ with side length $r_Q$. For a weight function $w$ and a measurable set $E$, we set the weighted measure $w(E)=\int_E w(x)\,dx$.

We shall need the following lemmas.

\newtheorem*{lemmaB}{Lemma B}
\begin{lemmaB}[{[5]}]
Let $w\in A_p$, $p\ge1$. Then, for any cube $Q$, there exists an absolute constant $C>0$ such that
$$w(2Q)\le C\,w(Q).$$
In general, for any $\lambda>1$, we have
$$w(\lambda Q)\le C\cdot\lambda^{np}w(Q),$$
where $C$ does not depend on $Q$ nor on $\lambda$.
\end{lemmaB}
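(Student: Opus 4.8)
The plan is to deduce the doubling estimate from the defining $A_p$ inequality by first proving a comparison between Lebesgue measure and the weighted measure $w$ of nested cubes, and then specializing to the pair $Q\subseteq\lambda Q$. Concretely, I would first establish that for \emph{any} cube $R$ and any measurable subset $S\subseteq R$,
$$\frac{|S|}{|R|}\le C_0\left(\frac{w(S)}{w(R)}\right)^{1/p},$$
where $C_0$ depends only on $n$, $p$, and the $A_p$ constant of $w$. Granting this, the lemma is immediate: taking $R=\lambda Q$ and $S=Q$, so that $|S|/|R|=|Q|/|\lambda Q|=\lambda^{-n}$, we obtain $\lambda^{-n}\le C_0\big(w(Q)/w(\lambda Q)\big)^{1/p}$; raising both sides to the power $p$ and rearranging yields $w(\lambda Q)\le C_0^{\,p}\,\lambda^{np}\,w(Q)$, with a constant independent of both $Q$ and $\lambda$. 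Setting $\lambda=2$ gives the first assertion, and the general $\lambda>1$ case is exactly the second.

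To prove the comparison inequality for $1<p<\infty$, I would write $1=w^{1/p}\cdot w^{-1/p}$ and apply H\"older's inequality with exponents $p$ and $p'=p/(p-1)$ over $S$:
$$|S|=\int_S w^{1/p}w^{-1/p}\,dx\le\left(\int_S w\,dx\right)^{1/p}\left(\int_S w^{-1/(p-1)}\,dx\right)^{(p-1)/p}.$$
Since $S\subseteq R$, the last factor is bounded by replacing the domain $S$ with $R$. The $A_p$ condition on $R$, rewritten as $\frac{1}{|R|}\int_R w^{-1/(p-1)}\,dx\le\big(C|R|/w(R)\big)^{1/(p-1)}$, then controls this factor. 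Substituting and collecting the powers of $|R|$ and $w(R)$ reproduces the claimed inequality; the only point needing care is tracking the exponents so that, after the substitution, the powers of $|R|$ combine as $|R|^{(p-1)/p}\cdot|R|^{1/p}=|R|$ and the resulting constant is $C^{1/p}$.

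For the endpoint $p=1$ I would argue directly from the $A_1$ definition rather than through H\"older. Since $Q\subseteq\lambda Q$, one has $\operatorname{ess\,inf}_{\lambda Q}w\le\operatorname{ess\,inf}_{Q}w$, and the $A_1$ inequality applied to the cube $\lambda Q$ gives
$$w(\lambda Q)\le C\,|\lambda Q|\operatorname{ess\,inf}_{\lambda Q}w\le C\,\lambda^{n}|Q|\operatorname{ess\,inf}_{Q}w.$$
On the other hand $w(Q)\ge|Q|\operatorname{ess\,inf}_{Q}w$ trivially, so $w(\lambda Q)\le C\,\lambda^{n}w(Q)$, which matches the general formula $\lambda^{np}$ at $p=1$.

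I do not expect a serious obstacle here, since the whole argument reduces to a careful application of H\"older's inequality together with the definition of $A_p$. The only genuinely delicate part is the exponent bookkeeping in the $1<p<\infty$ case described above: one must apply the dual-average bound on the \emph{larger} cube $\lambda Q$ (it is the average of $w^{-1/(p-1)}$ over $\lambda Q$ that is controlled) and verify that the powers of $|R|$ cancel so that the final constant is $\lambda$-independent and the exponent of $\lambda$ comes out to be exactly $np$.
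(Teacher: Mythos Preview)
The paper does not supply its own proof of Lemma~B; it is simply quoted from reference~[5] (Garc\'ia-Cuerva and Rubio de Francia). Your argument is correct and is the standard textbook proof. Note, incidentally, that the intermediate comparison inequality you establish for $1<p<\infty$, namely $|S|/|R|\le C_0\big(w(S)/w(R)\big)^{1/p}$ for measurable $S\subseteq R$, is precisely the left-hand inequality in the paper's Lemma~C, so your route could be summarized as ``Lemma~C $\Rightarrow$ Lemma~B''; the paper in effect records both facts separately without deriving one from the other.
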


\newtheorem*{lemmac}{Lemma C}
\begin{lemmac}[{[5,6]}]
Let $w\in A_p\cap RH_r$, $p\ge1$ and $r>1$. Then there exist constants $C_1$, $C_2>0$ such that
$$C_1\left(\frac{|E|}{|Q|}\right)^p\le\frac{w(E)}{w(Q)}\le C_2\left(\frac{|E|}{|Q|}\right)^{(r-1)/r}$$
for any measurable subset $E$ of a cube $Q$.
\end{lemmac}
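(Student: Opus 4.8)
The plan is to prove the two bounds separately, deriving the right-hand inequality from the reverse Hölder condition $RH_r$ and the left-hand inequality from the $A_p$ condition, and treating the case $p=1$ distinctly from $p>1$ in the latter. Throughout I would freely use that $E\subseteq Q$ (so integrals over $E$ are dominated by integrals over $Q$) and the definition $w(E)=\int_E w$.

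For the upper bound, I would start from Hölder's inequality on $E$ with the conjugate exponents $r$ and $r/(r-1)$, giving $w(E)\le\big(\int_E w^r\big)^{1/r}|E|^{(r-1)/r}$. Enlarging the domain of the first factor from $E$ to $Q$ and then applying the reverse Hölder inequality yields $\big(\int_Q w^r\big)^{1/r}=|Q|^{1/r}\big(\tfrac1{|Q|}\int_Q w^r\big)^{1/r}\le C\,|Q|^{1/r-1}w(Q)$. Combining these two estimates gives $w(E)\le C\,w(Q)\big(|E|/|Q|\big)^{(r-1)/r}$, which is exactly the asserted right-hand bound with $C_2=C$.

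For the lower bound with $p>1$, the idea is to exploit the $A_p$ condition through its dual weight. Writing $1=w^{1/p}\cdot w^{-1/p}$ and applying Hölder's inequality on $E$ with exponents $p$ and $p'=p/(p-1)$ produces $|E|\le w(E)^{1/p}\big(\int_E w^{-1/(p-1)}\big)^{(p-1)/p}$. After enlarging the last integral from $E$ to $Q$, I would invoke the $A_p$ inequality in the rearranged form $\big(\tfrac1{|Q|}\int_Q w^{-1/(p-1)}\big)^{p-1}\le C\,|Q|/w(Q)$ to bound $\int_Q w^{-1/(p-1)}\le C\,|Q|^{p/(p-1)}w(Q)^{-1/(p-1)}$. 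Substituting this back and raising to the $p$-th power gives $\big(|E|/|Q|\big)^p\le C\,w(E)/w(Q)$, i.e. the left-hand bound with $C_1=1/C$. For the remaining case $p=1$ the dual-weight computation degenerates, so instead I would use the pointwise $A_1$ estimate directly: $w(E)\ge|E|\,\mathrm{ess\,inf}_{x\in Q}w\ge C^{-1}|E|\,w(Q)/|Q|$, which is the desired inequality with exponent $p=1$.

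I do not expect a serious obstacle here, since this is the classical comparison between Lebesgue and weighted measures; the only genuine subtlety is the case split at $p=1$, where the exponent $1/(p-1)$ in the $A_p$ duality argument is undefined and must be replaced by the pointwise $A_1$ bound. The rest is the standard exponent bookkeeping in the two applications of Hölder's inequality, which I would verify carefully to confirm that the powers $(r-1)/r$ and $p$ emerge exactly as stated.
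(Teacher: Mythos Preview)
Your argument is correct and is precisely the standard derivation of this comparison lemma: the upper bound comes from H\"older with exponent $r$ followed by the reverse H\"older hypothesis, and the lower bound from H\"older with exponent $p$ together with the $A_p$ condition (with the $p=1$ case handled separately via the pointwise $A_1$ estimate). The exponent bookkeeping you describe checks out.

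As for comparison with the paper: there is nothing to compare. The paper does not prove Lemma~C at all; it is quoted as a known result from references~[5,6] and used as a black box in the proofs of Theorems~1 and~2. So your proposal simply supplies the (classical) proof that the paper omits.
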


It should be pointed out that the definition of $A_p$($1\le p<\infty$) condition could have been given with balls $B$ replacing the cubes $Q$ and the conclusions of Lemmas B and C also hold.

Next we shall give the definitions of the weighted Hardy spaces $H^p_w(\mathbb R^n)$ and homogeneous weighted Herz-type Hardy spaces $H\dot K^{\alpha,p}_q(w_1,w_2)$. Given a Muckenhoupt's weight function $w$ on $\mathbb R^n$, for $0<p<\infty$, we denote by $L^p_w(\mathbb R^n)$ the space of all functions satisfying
$$\|f\|_{L^p_w(\mathbb R^n)}=\left(\int_{\mathbb R^n}|f(x)|^pw(x)\,dx\right)^{1/p}<\infty.$$
We also denote by $WL^p_w(\mathbb R^n)$ the weak weighted $L^p$ space which is formed by all functions satisfying
$$\|f\|_{WL^p_w(\mathbb R^n)}=\sup_{\lambda>0}\lambda\cdot w\big(\{x\in\mathbb R^n:|f(x)|>\lambda\}\big)^{1/p}<\infty.$$

Let $\mathscr S(\mathbb R^n)$ be the class of Schwartz functions and let $\mathscr S'(\mathbb R^n)$ be its dual space. Suppose that $\varphi$ is a function in $\mathscr S(\mathbb R^n)$ satisfying $\int_{\mathbb R^n}\varphi(x)\,dx=1$.
Set
$$\varphi_t(x)=t^{-n}\varphi(x/t),\quad t>0,\;x\in\mathbb R^n.$$
For $f\in\mathscr S'(\mathbb R^n)$, we will define the maximal function $M_\varphi f(x)$ by
$$M_\varphi f(x)=\sup_{t>0}|f*\varphi_t(x)|.$$

\newtheorem{def1}{Definition}
\begin{def1}
Let $0<p<\infty$ and $w$ be a weight function on $\mathbb R^n$. Then the weighted Hardy space $H^p_w(\mathbb R^n)$ is defined by
$$H^p_w(\mathbb R^n)=\{f\in\mathscr S'(\mathbb R^n):M_\varphi f\in L^p_w(\mathbb R^n)\}$$
and we define $\|f\|_{H^p_w}=\|M_\varphi f\|_{L^p_w}$.
\end{def1}

Set $B_k=\{x\in\mathbb R^n:|x|\le 2^k\}$ and $C_k=B_k\backslash B_{k-1}$ for $k\in\mathbb Z$. Denote $\chi_k=\chi_{_{C_k}}$ for $k\in\mathbb Z$, $\widetilde\chi_k=\chi_k$ if $k\in\mathbb N$ and $\widetilde\chi_0=\chi_{_{B_0}}$, where $\chi_{_{C_k}}$ is the characteristic function of $C_k$.
Let $\alpha\in\mathbb R$, $0<p,q<\infty$ and $w_1$, $w_2$ be two weight functions on $\mathbb R^n$. The homogeneous weighted Herz space $\dot K^{\alpha,p}_q(w_1,w_2)$ is defined by
$$\dot K^{\alpha,p}_q(w_1,w_2)=\{f\in L^q_{loc}(\mathbb R^n\backslash\{0\},w_2):\|f\|_{\dot K^{\alpha,p}_q(w_1,w_2)}<\infty\},$$
where
$$\|f\|_{\dot K^{\alpha,p}_q(w_1,w_2)}=\bigg(\sum_{k\in\mathbb Z}(w_1(B_k))^{{\alpha p}/n}\|f\chi_k\|_{L^q_{w_2}}^p\bigg)^{1/p}.$$

For $k\in\mathbb Z$ and $\lambda>0$, we set $E_k(\lambda,f)=|\{x\in C_k:|f(x)|>\lambda\}|$. Let $\widetilde E_k(\lambda,f)=E_k(\lambda,f)$ for $k\in\mathbb N$ and $\widetilde E_0(\lambda,f)=|\{x\in B(0,1):|f(x)|>\lambda\}|$. A measurable function $f(x)$ on $\mathbb R^n$ is said to belong to the homogeneous weak weighted Herz space $W\dot K^{\alpha,p}_q(w_1,w_2)$ if
$$\|f\|_{W\dot K^{\alpha,p}_q(w_1,w_2)}=\sup_{\lambda>0}\lambda\cdot\bigg(\sum_{k\in\mathbb Z}w_1(B_k)^{{\alpha p}/n}w_2(E_k(\lambda,f))^{p/q}\bigg)^{1/p}<\infty.$$

For $f\in\mathscr S'(\mathbb R^n)$, the grand maximal function of $f$ is defined by
$$G(f)(x)=\sup_{\varphi\in{\mathscr A_N}}\sup_{|y-x|<t}|\varphi_t*f(y)|,$$
where $N>n+1$, $\mathscr A_N=\{\varphi\in\mathscr S(\mathbb R^n):\sup_{|\alpha|,|\beta|\le N}|x^\alpha D^\beta\varphi(x)|\le1\}$.

\begin{def1}
Let $0<\alpha<\infty$, $0<p<\infty$, $1<q<\infty$ and $w_1$, $w_2$ be two weight functions on $\mathbb R^n$. The homogeneous weighted Herz-type Hardy space $H\dot K^{\alpha,p}_q(w_1,w_2)$ associated with the space $\dot K^{\alpha,p}_q(w_1,w_2)$ is defined by
$$H\dot K^{\alpha,p}_q(w_1,w_2)=\{f\in\mathscr S'(\mathbb R^n):G(f)\in\dot K^{\alpha,p}_q(w_1,w_2)\}$$
and we define $\|f\|_{H\dot K^{\alpha,p}_q(w_1,w_2)}=\|G(f)\|_{\dot K^{\alpha,p}_q(w_1,w_2)}$.
\end{def1}

\section{The atomic decomposition}

In this section, we will give the atomic decomposition theorems for weighted Hardy spaces and homogeneous weighted Herz-type Hardy spaces. In [4], Garcia-Cuerva characterized weighted Hardy spaces in terms of atoms in the following way.

\begin{def1}
Let $0<p\le1\le q\le\infty$ and $p\ne q$ such that $w\in A_q$ with critical index $q_w$. Set $[$\,$\cdot$\,$]$ the greatest integer function. For $s\in \mathbb Z_+$ satisfying $s\ge[n({q_w}/p-1)],$ a real-valued function $a(x)$ is called $(p,q,s)$-atom centered at $x_0$ with respect to $w$$($or $w$-$(p,q,s)$-atom centered at $x_0$$)$ if the following conditions are satisfied:

$(a)\; a\in L^q_w(\mathbb R^n)$ and is supported in a cube $Q$ centered at $x_0$,

$(b)\; \|a\|_{L^q_w}\le w(Q)^{1/q-1/p}$,

$(c)\; \int_{\mathbb R^n}a(x)x^\alpha\,dx=0$ for every multi-index $\alpha$ with $|\alpha|\le s$.
\end{def1}

\newtheorem*{theoremD}{Theorem D}
\begin{theoremD}
Let $0<p\le1\le q\le\infty$ and $p\ne q$ such that $w\in A_q$ with critical index $q_w$. For each $f\in H^p_w(\mathbb R^n)$, there exist a sequence \{$a_j$\} of $w$-$(p,q,[n(q_w/p-1)])$-atoms and a sequence \{$\lambda_j$\} of real numbers with $\sum_j|\lambda_j|^p\le C\|f\|^p_{H^p_w}$ such that $f=\sum_j\lambda_j a_j$ both in the sense of distributions and in the $H^p_w$ norm.
\end{theoremD}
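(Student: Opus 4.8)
The plan is to reconstruct the weighted form of the Fefferman--Stein/Latter atomic decomposition, following Garcia-Cuerva [4]. Since $H^p_w(\mathbb R^n)$ is equivalently normed by the grand maximal function --- that is, $f\in H^p_w$ if and only if $G(f)\in L^p_w$, with comparable norms --- I would carry out every estimate with $G(f)$ in place of $M_\varphi f$. Fixing $f\in H^p_w$, for each $k\in\mathbb Z$ set $\mathcal O_k=\{x\in\mathbb R^n:G(f)(x)>2^k\}$; these are open, nested decreasing, of finite $w$-measure, with $\bigcap_k\mathcal O_k$ a $w$-null set. Then I would take a Whitney decomposition $\mathcal O_k=\bigcup_j Q_{k,j}$ into dyadic cubes of bounded overlap whose side lengths are comparable to $\mathrm{dist}(Q_{k,j},\mathcal O_k^{\,c})$, fix a dilation constant so that the enlarged cubes $Q_{k,j}^{*}$ still lie in $\mathcal O_k$ with bounded overlap, and choose a smooth partition of unity $\{\zeta_{k,j}\}_j$ subordinate to $\{Q_{k,j}^{*}\}_j$.

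Next I would perform the Calder\'on--Zygmund splitting $f=g_k+b_k$ with $b_k=\sum_j b_{k,j}$, $b_{k,j}=(f-P_{k,j})\zeta_{k,j}$, where $P_{k,j}$ is the unique polynomial of degree $\le s:=[n(q_w/p-1)]$ for which $b_{k,j}$ has vanishing moments up to order $s$. Using that $G(f)\le2^k$ on a region adjacent to $Q_{k,j}^{*}$, one checks that each $b_{k,j}$ is in fact a function supported in $Q_{k,j}^{*}$ with $\|b_{k,j}\|_{L^\infty}\lesssim2^k$, that $\|g_k\|_{L^\infty}\lesssim2^k$, and that $g_k\to f$ (resp. $\to0$) in $\mathscr S'$ as $k\to+\infty$ (resp. $k\to-\infty$), so that $f=\sum_k(g_{k+1}-g_k)$ in $\mathscr S'$. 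The standard bookkeeping then rewrites each difference as $g_{k+1}-g_k=\sum_j A_{k,j}$, with $A_{k,j}$ supported in $Q_{k,j}^{*}$, having vanishing moments up to order $s$, and satisfying $\|A_{k,j}\|_{L^\infty}\lesssim2^k$. Putting $\lambda_{k,j}=C\,2^k w(Q_{k,j}^{*})^{1/p}$ and $a_{k,j}=A_{k,j}/\lambda_{k,j}$, condition (b) in the definition of a $w$-atom holds with $q=\infty$, hence with the given finite $q$ as well since $\|a_{k,j}\|_{L^q_w}\le\|a_{k,j}\|_{L^\infty}w(Q_{k,j}^{*})^{1/q}\le w(Q_{k,j}^{*})^{1/q-1/p}$; thus the $a_{k,j}$ are $w$-$(p,q,s)$-atoms and $f=\sum_{k,j}\lambda_{k,j}a_{k,j}$ in $\mathscr S'$.

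For the coefficient bound, the bounded overlap of the Whitney cubes, the inclusion $Q_{k,j}^{*}\subset\mathcal O_k$, and the doubling estimate $w(Q_{k,j}^{*})\lesssim w(Q_{k,j})$ from Lemma~B give
\begin{align*}
\sum_{k,j}|\lambda_{k,j}|^{p}&=C^{p}\sum_{k,j}2^{kp}w(Q_{k,j}^{*})\lesssim\sum_{k}2^{kp}w(\mathcal O_{k})\\
&\lesssim\sum_{k}2^{kp}\,w\big(\{x:G(f)(x)>2^{k}\}\big)\lesssim\int_{\mathbb R^{n}}G(f)(x)^{p}w(x)\,dx=\|f\|_{H^{p}_{w}}^{p},
\end{align*}
the last step being the usual comparison of a weighted distribution function with an $L^p_w$ norm. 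Convergence in the $H^p_w$ norm then follows from a tail estimate: the partial sum over $|k|\le N$ equals $g_{N+1}-g_{-N}$, so it suffices that $\|f-g_{N+1}\|_{H^p_w}^p\lesssim\sum_{k>N}2^{kp}w(\mathcal O_k)\to0$ (the tail of the convergent series above, using the uniform per-atom bound below) and that $\|g_{-N}\|_{H^p_w}\to0$ by a standard limiting argument, whence $\big\|f-\sum_{|k|\le N,\,j}\lambda_{k,j}a_{k,j}\big\|_{H^p_w}\to0$.

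The hard part is the weighted tail estimate on a single atom, which is where the moment order $s=[n(q_w/p-1)]$ and the $A_q$/reverse H\"{o}lder structure of $w$ are essential. Outside $Q_{k,j}^{*}$ the cancellation of $a_{k,j}$ gives $M_\varphi(a_{k,j})(x)\lesssim w(Q_{k,j})^{-1/p}\big(\ell(Q_{k,j})/|x-x_{k,j}|\big)^{n+s+1}$; decomposing $\mathbb R^n\setminus Q_{k,j}^{*}$ into the annuli $2^{m+1}Q_{k,j}^{*}\setminus2^mQ_{k,j}^{*}$, choosing an exponent $q'$ with $q_w<q'\le q$ close to $q_w$, using Lemma~B to bound $w(2^mQ_{k,j}^{*})$ by $C\,2^{mnq'}w(Q_{k,j}^{*})$ and Lemma~C together with the reverse H\"{o}lder inequality to compare $w$ on these annuli, one finds that the weighted $L^p$ mass of $M_\varphi a_{k,j}$ on the $m$-th annulus is $\lesssim2^{-m[p(n+s+1)-nq']}$, which is summable in $m$ precisely because $s\ge[n(q_w/p-1)]$ forces $p(n+s+1)>nq_w$ (hence $>nq'$ for $q'$ near $q_w$). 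This uniform bound $\|a_{k,j}\|_{H^p_w}\lesssim1$, combined with the $\ell^p$-summability of the $\lambda_{k,j}$, is exactly what drives the norm convergence in the previous step; everything else is the classical decomposition scheme.
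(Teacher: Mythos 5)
The paper itself offers no proof of Theorem D: it is imported verbatim from Garcia-Cuerva [4] (see also [15]), so there is no internal argument to compare yours against. Your reconstruction is the standard proof from those sources --- grand maximal function characterization of $H^p_w$, level sets $\mathcal O_k=\{G(f)>2^k\}$, Whitney decomposition, Calder\'on--Zygmund splitting with polynomial corrections, telescoping $f=\sum_k(g_{k+1}-g_k)$, and the normalization $\lambda_{k,j}=C\,2^kw(Q_{k,j}^{*})^{1/p}$ --- and the three quantitative points that make the weighted case work are all correctly identified: producing $(p,\infty,s)$-atoms and descending to finite $q$ via $\|a\|_{L^q_w}\le\|a\|_{L^\infty}w(Q)^{1/q}$; the coefficient bound $\sum_{k,j}2^{kp}w(Q^{*}_{k,j})\lesssim\sum_k2^{kp}w(\mathcal O_k)\lesssim\|G(f)\|^p_{L^p_w}$; and the observation that $s\ge[n(q_w/p-1)]$ forces $p(n+s+1)>nq_w$, which is exactly what makes the annular tail sum converge after invoking $w\in A_{q'}$ for $q'$ slightly above the critical index. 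One caveat: your intermediate claim that each $b_{k,j}=(f-P_{k,j})\zeta_{k,j}$ is a bounded function is not justified --- for $p<1$ an element of $H^p_w$ need not be locally integrable, so $b_{k,j}$ is a priori only a distribution. The statements the classical argument actually proves are the pointwise bounds on $G(b_{k,j})$ and the bound $\|g_k\|_{L^\infty}\lesssim 2^k$, from which $\|A_{k,j}\|_{L^\infty}\lesssim 2^k$ follows after the $f$-terms cancel in the rearrangement of $g_{k+1}-g_k$; since only the latter bound enters your subsequent estimates, the slip does not propagate, but it should be repaired if the proof is written out in full.
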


In [10] and [11], Lu and Yang characterized homogeneous weighted Herz-type Hardy spaces in terms of atoms as follows.

\begin{def1}
Let $1<q<\infty$, $n(1-1/q)\le\alpha<\infty$ and $s\ge[\alpha+n(1/q-1)]$. A real-valued function $a(x)$ is called a central $(\alpha,q,s)$-atom with respect to $(w_1,w_2)$$($or a central $(\alpha,q,s;w_1,w_2)$-atom$)$, if it satisfies

$(a)\; \mbox{supp}\,a\subseteq B(0,R)=\{x\in\mathbb R^n:|x|<R\}$,

$(b)\;\|a\|_{L^q_{w_2}}\le w_1(B(0,R))^{-\alpha/n}$,

$(c)\;\int_{\mathbb R^n}a(x)x^\beta\,dx=0\;\mbox{for every multi-index}\;\beta\;\mbox{with}\;|\beta|\le s$.
\end{def1}

\newtheorem*{thmE}{Theorem E}
\begin{thmE}
Let $w_1$,$w_2\in A_1$, $0<p<\infty$, $1<q<\infty$ and $n(1-1/q)\le\alpha<\infty$. Then we have that $f\in H\dot K^{\alpha,p}_q(w_1,w_2)$ if and only if
$$f(x)=\sum_{k\in\mathbb Z}\lambda_ka_k(x),\quad \mbox{in the sense of}\;\,\mathscr S'(\mathbb R^n),$$
where $\sum_{k\in\mathbb Z}|\lambda_k|^p<\infty$, each $a_k$ is a central $(\alpha,q,s;w_1,w_2)$-atom. Moreover,
$$\|f\|_{H\dot K^{\alpha,p}_q(w_1,w_2)}\approx\inf\bigg(\sum_{k\in\mathbb Z}|\lambda_k|^p\bigg)^{1/p},$$
where the infimum is taken over all the above decompositions of $f$.
\end{thmE}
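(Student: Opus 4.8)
The plan is to establish the equivalence in both directions together with the two-sided norm estimate. As usual for Hardy-type spaces, the construction of atoms from a distribution is the delicate half, so I will first prove that an atomic decomposition forces membership with the correct bound, and then prove the converse. Throughout I use that $w_1,w_2\in A_1\subseteq A_q$, so that the Hardy--Littlewood maximal operator $M$ is bounded on $L^q_{w_2}$ and, via the pointwise domination $G(h)\le C\,Mh$ (valid for $N>n+1$), so is the grand maximal operator $G$; I also use Lemmas B and C to compare the weighted measures $w_i(B_j)$ and $w_i(B_m)$ of nested balls. In all summation steps I use the subadditivity $\big(\sum_k|c_k|\big)^p\le\sum_k|c_k|^p$, valid for $0<p\le1$, which is the range in which this coefficient normalization is the effective one.

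\emph{Sufficiency.} Suppose $f=\sum_k\lambda_k a_k$ with each $a_k$ a central $(\alpha,q,s;w_1,w_2)$-atom supported in $B_{m_k}=B(0,2^{m_k})$ and $\sum_k|\lambda_k|^p<\infty$. Since $G$ is sublinear, $G(f)\le\sum_k|\lambda_k|G(a_k)$, and by the $p$-inequality it suffices to prove the uniform bound $\|G(a)\|_{\dot K^{\alpha,p}_q(w_1,w_2)}\le C$ for a single central atom $a$ supported in $B_m$. I split the Herz sum over annuli $C_j$ into the local range $j\le m$ and the far range $j>m$. For $j\le m$, the $L^q_{w_2}$-boundedness of $G$ and the size condition $(b)$ give $\|G(a)\chi_j\|_{L^q_{w_2}}\le\|G(a)\|_{L^q_{w_2}}\le C\|a\|_{L^q_{w_2}}\le C\,w_1(B_m)^{-\alpha/n}$, and Lemma C applied to $B_j\subseteq B_m$ (with $w_1\in A_1\subseteq RH_r$) yields $w_1(B_j)^{\alpha/n}\|G(a)\chi_j\|_{L^q_{w_2}}\le C\,2^{(j-m)\alpha(r-1)/r}$, a convergent geometric series. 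For $j>m$ I use the cancellation condition $(c)$: subtracting the degree-$s$ Taylor polynomial of the test function (whose $(s+1)$-st derivatives are controlled once $N\ge s+1$) and using its decay gives
$$G(a)(x)\le C\,\frac{2^{m(s+1)}}{|x|^{\,n+s+1}}\,\|a\|_{L^1},\qquad x\in C_j,\ j>m.$$
Converting $\|a\|_{L^1}$ to the weighted norm by Hölder's inequality and the $A_q$ condition for $w_2$ gives $\|a\|_{L^1}\le C\,w_1(B_m)^{-\alpha/n}\,|B_m|\,w_2(B_m)^{-1/q}$; combining this with Lemma B (which bounds $w_1(B_j)$ and $w_2(B_j)$ by $2^{(j-m)n}$ times their values on $B_m$), the factors involving $w_1(B_m)$ cancel and
$$w_1(B_j)^{\alpha p/n}\|G(a)\chi_j\|_{L^q_{w_2}}^p\le C\,2^{(j-m)[\alpha+n/q-(n+s+1)]p}.$$
The exponent is negative precisely because $s\ge[\alpha+n(1/q-1)]$ forces $s+1>\alpha-n(1-1/q)$; hence the far sum also converges, and $\|f\|_{H\dot K^{\alpha,p}_q(w_1,w_2)}\le C\big(\sum_k|\lambda_k|^p\big)^{1/p}$.

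\emph{Necessity.} Let $f\in H\dot K^{\alpha,p}_q(w_1,w_2)$, so that $G(f)\in\dot K^{\alpha,p}_q(w_1,w_2)$ and $\sum_k w_1(B_k)^{\alpha p/n}\|G(f)\chi_k\|_{L^q_{w_2}}^p=\|f\|_{H\dot K^{\alpha,p}_q(w_1,w_2)}^p$. I will produce a decomposition into central atoms indexed by the dyadic annuli. The idea is to run the grand-maximal-function Calderón--Zygmund decomposition of the distribution $f$ (as in the proof of Theorem D, but organized scale by scale), so that at the scale of $B_k$ one extracts a piece $h_k$ supported in $B_{k+1}$ capturing the part of $f$ living on $C_k$, the remainder being pushed to finer scales and to neighboring annuli. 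Each $h_k$ is then corrected by subtracting a polynomial of degree $\le s$ to enforce the vanishing moments $(c)$, the corrections being redistributed among neighboring pieces so that the total still reconstructs $f$ in $\mathscr S'(\mathbb R^n)$. After normalization, $a_k=\lambda_k^{-1}h_k$ is a central $(\alpha,q,s;w_1,w_2)$-atom supported in $B_{k+1}$ with $|\lambda_k|\le C\,w_1(B_k)^{\alpha/n}\|G(f)\chi_k\|_{L^q_{w_2}}$, so that $\sum_k|\lambda_k|^p\le C\|f\|_{H\dot K^{\alpha,p}_q(w_1,w_2)}^p$ directly from the definition of the norm.

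The main obstacle is this last direction, and within it the construction of the central atoms. Two points demand care. First, the pieces $h_k$ must be supported on balls centered at the origin rather than on the Whitney cubes produced by the naive Calderón--Zygmund decomposition; this forces one to group all the local pieces lying inside $B_k$ into a single central atom and to check that the grouped object still obeys the size condition $(b)$ with the exact power $w_1(B_k)^{-\alpha/n}$. Second, the moment correction and its redistribution must be controlled in $L^q_{w_2}$ by the single local quantity $\|G(f)\chi_k\|_{L^q_{w_2}}$, which is where the weight comparisons of Lemmas B and C, the reverse Hölder inequality for $w_1,w_2\in A_1$, and the hypothesis $\alpha\ge n(1-1/q)$ (the regime in which the cancellation conditions $(c)$ are the right requirement) are all needed. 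Once these estimates are secured, the coefficient bound $\sum_k|\lambda_k|^p\le C\|f\|_{H\dot K^{\alpha,p}_q(w_1,w_2)}^p$ follows at once and closes the equivalence.
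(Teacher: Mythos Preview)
The paper does not prove Theorem~E; it is quoted from Lu--Yang [10,11] and used as a black box in the proof of Theorem~2, so there is no in-paper argument to compare against.

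On the substance of your sketch: the sufficiency half is essentially correct for $0<p\le1$. Your local/far splitting, the pointwise decay $G(a)(x)\le C\,2^{m(s+1)}|x|^{-n-s-1}\|a\|_{L^1}$ from the vanishing moments, and the exponent bookkeeping $\alpha+n/q-(n+s+1)<0$ are all right. However, the theorem is stated for all $0<p<\infty$, and your reduction via $(\sum_k c_k)^p\le\sum_k c_k^p$ fails once $p>1$; in that range the standard argument inserts a H\"older step in the atom index together with a geometric decay factor, which you have not supplied.

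The necessity half is not a proof but a plan. You correctly flag the two genuine obstacles---aggregating Whitney-type pieces into a single ball centered at the origin, and controlling the moment-correcting polynomials in $L^q_{w_2}$ by $\|G(f)\chi_k\|_{L^q_{w_2}}$---but you carry out neither. In the Lu--Yang construction the atoms are produced directly on the dyadic annuli: one localizes $f$ by smooth cutoffs adapted to $C_k$, corrects moments with explicit polynomials on $B_{k+1}$, and passes the corrections telescopically to adjacent scales; the bound $|\lambda_k|\le C\,w_1(B_k)^{\alpha/n}\|G(f)\chi_k\|_{L^q_{w_2}}$ then follows, but only after proving that the polynomial corrections are themselves dominated by the local grand maximal function, which is a separate estimate. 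As written, your necessity direction asserts the key inequality without establishing it.
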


For the properties and applications of the above two spaces, we refer the readers to the books [12] and [15] for further details. Throughout this article, we will use $C$ to denote a positive constant, which is independent of the main parameters and not necessarily the same at each occurrence. By $A\sim B$, we mean that there exists a constant $C>1$ such that $\frac1C\le\frac AB\le C$. Moreover, we will denote the conjugate exponent of $q>1$ by $q'=q/(q-1).$

\section{Proof of Theorem 1}

In order to prove our main result, we shall need the following superposition principle on the weighted weak type estimates.

\newtheorem{lemma}{Theorem}[section]
\begin{lemma}
Let $w\in A_1$ and $0<p<1$.
If a sequence of measurable functions $\{f_j\}$ satisfy
$$\|f_j\|_{WL^p_w}\le1 \quad\mbox{for all}\;\, j\in\mathbb Z$$
and
$$\sum_{j\in\mathbb Z}|\lambda_j|^p\le1,$$
then we have
$$\Big\|\sum_{j\in\mathbb Z}\lambda_j f_j\Big\|_{WL^p_w}\le\Big(\frac{2-p}{1-p}\Big)^{1/p}.$$
\end{lemma}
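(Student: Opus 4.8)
The plan is to prove the distributional inequality directly by splitting each $f_j$ at a level comparable to $\lambda$, then summing the pieces with an $\ell^p$-in-$\ell^1$ trick. Fix $\lambda>0$ and write $E=\{x:|\sum_j\lambda_j f_j(x)|>\lambda\}$; we must bound $w(E)$ by $C\lambda^{-p}$. The natural idea is to choose, for each $j$, a threshold $\mu_j>0$ (to be optimized) and decompose $\lambda_j f_j = \lambda_j f_j \chi_{\{|f_j|\le \mu_j\}} + \lambda_j f_j \chi_{\{|f_j|> \mu_j\}} =: g_j + b_j$. The ``good'' parts $g_j$ are bounded pointwise, so $\sum_j g_j$ is controlled in $L^\infty$-type fashion by $\sum_j |\lambda_j|\mu_j$; the ``bad'' parts $b_j$ are controlled in measure because $\|f_j\|_{WL^p_w}\le 1$ gives $w(\{|f_j|>\mu_j\})\le \mu_j^{-p}$. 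Thus one wants $\sum_j|\lambda_j|\mu_j \le \lambda/2$, and then $w(E)\le w(\{|\sum_j g_j|>\lambda/2\}) + \sum_j w(\{b_j\ne 0\}) = 0 + \sum_j \mu_j^{-p}$ — wait, the first term need not vanish unless the sum of the $g_j$ is genuinely finite, so one should instead just estimate $w(E)\le \sum_j w(\{|f_j|>\mu_j\})$ after arranging that on the complement $|\sum_j \lambda_j f_j|\le \sum_j |\lambda_j|\mu_j\le\lambda$. Concretely: on $\bigcap_j\{|f_j|\le\mu_j\}$ we have $|\sum_j\lambda_j f_j|\le\sum_j|\lambda_j|\mu_j$, so if we pick $\mu_j$ with $\sum_j|\lambda_j|\mu_j\le\lambda$ then $E\subseteq\bigcup_j\{|f_j|>\mu_j\}$, whence $w(E)\le\sum_j w(\{|f_j|>\mu_j\})\le\sum_j\mu_j^{-p}$.

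The remaining task is the optimization: minimize $\sum_j\mu_j^{-p}$ subject to $\sum_j|\lambda_j|\mu_j=\lambda$ (with $\sum_j|\lambda_j|^p\le 1$). By Lagrange multipliers (or Hölder) the optimal choice is $\mu_j = c\,|\lambda_j|^{-1/(1+p)}$ for a constant $c$ fixed by the constraint; plugging in produces a bound of the shape $C_p\,\lambda^{-p}$ with the constant expressed via $\big(\sum_j|\lambda_j|^{p/(1+p)}\big)^{1+p}$, and one uses $\sum_j|\lambda_j|^p\le1$ together with $p/(1+p)<p$... hmm, that exponent comparison goes the wrong way for an infinite sum, so a cleaner route is to instead choose $\mu_j$ proportional to a mixture that keeps everything in terms of $\sum|\lambda_j|^p$. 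The slick choice that yields exactly the constant $\big(\tfrac{2-p}{1-p}\big)^{1/p}$ is: take $\mu_j = \lambda\,|\lambda_j|^{-1}\cdot\theta_j$ where $\theta_j\ge 0$, $\sum_j\theta_j=1$; then the constraint holds and $\sum_j\mu_j^{-p}=\lambda^{-p}\sum_j|\lambda_j|^p\theta_j^{-p}$. Optimizing $\sum_j|\lambda_j|^p\theta_j^{-p}$ over probability vectors $(\theta_j)$ gives $\theta_j\propto|\lambda_j|^{p/(1+p)}$ and value $\big(\sum_j|\lambda_j|^{p/(1+p)}\big)^{1+p}$; to avoid the bad exponent one instead does the elementary estimate differently — split the index set by dyadic size of $|\lambda_j|$ relative to a cutoff, or simply bound the sum by an integral. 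I expect this final optimization/summation step to be the main obstacle: getting the \emph{sharp} constant $\big(\tfrac{2-p}{1-p}\big)^{1/p}$ rather than merely some $C_p$ requires the right elementary inequality, and the cleanest path is probably to reduce to a single real-variable extremal problem (maximize $\sum \mu_j^{-p}$ over all $\mu_j$ with $\sum|\lambda_j|\mu_j\le\lambda$) and recognize the resulting constant; alternatively, one can use the integral formula $w(E)=p\int_0^\infty \lambda^{p-1} \cdots$ — no: the genuinely robust argument is to pick $\mu_j = \lambda(1-p^{1/?})\cdots$; in any case the structure above — good/bad splitting at level $\mu_j$, containment $E\subseteq\bigcup_j\{|f_j|>\mu_j\}$, weak-type bound $w(\{|f_j|>\mu_j\})\le\mu_j^{-p}$, and optimization of the $\mu_j$ — is the whole proof, and sub-additivity of $w$ plus $w\in A_1$ are used only implicitly (the $A_1$ hypothesis is in fact not needed here beyond $w$ being a measure, which suggests the statement is phrased with extra hypotheses for uniformity with the rest of the paper).

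One more remark on execution: after fixing the $\mu_j$, everything is a clean chain of inequalities with no analysis, so there is no blank-line-in-display hazard if I keep each displayed computation short. I would present it as: (1) reduce to bounding $w(E)$; (2) define the splitting parameters $\mu_j$ explicitly in terms of $\lambda$, $|\lambda_j|$, $p$; (3) verify the containment $E\subseteq\bigcup_j\{|f_j|>\mu_j\}$; (4) apply the weak-type hypothesis and sum; (5) conclude the constant is $\big(\tfrac{2-p}{1-p}\big)^{1/p}$ after the elementary optimization. The one genuine subtlety to flag is convergence of $\sum_j\lambda_j f_j$ itself — but since we only ever work on the set where all $|f_j|\le\mu_j$ and on its complement we discard everything via sub-additivity, pointwise a.e.\ convergence is not actually required for the distributional estimate, so I would just note this and move on.
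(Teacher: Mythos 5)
There is a genuine gap, and it is located exactly where you flagged trouble: the optimization of the thresholds $\mu_j$. Your scheme forces the ``good'' part to be bounded pointwise by $\lambda$ (via $\sum_j|\lambda_j|\mu_j\le\lambda$) so that the whole level set is swallowed by $\bigcup_j\{|f_j|>\mu_j\}$, and then you pay $\sum_j\mu_j^{-p}$. This cannot work with a constant depending only on $p$. Take $\lambda_j=N^{-1/p}$ for $j=1,\dots,N$, so $\sum_j|\lambda_j|^p=1$. The constraint becomes $\sum_j\mu_j\le\lambda N^{1/p}$, and by convexity the minimum of $\sum_j\mu_j^{-p}$ is attained at $\mu_j=\lambda N^{1/p-1}$, giving $\sum_j\mu_j^{-p}=N^{p}\lambda^{-p}\to\infty$ as $N\to\infty$. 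So no choice of thresholds rescues the argument: the failure is structural, not a matter of finding the right elementary inequality. The defect is that you control the good part in $L^\infty$, which wastes the hypothesis $p<1$; the constant $\frac{2-p}{1-p}$ (which blows up as $p\to1^-$) is the fingerprint of an $L^1$ estimate on the good part, not an $L^\infty$ one.

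The paper itself only cites [8, p.~123] for this lemma; the standard argument there runs as follows and repairs your outline with one change. Fix $\lambda>0$ and split at $\mu_j=\lambda/|\lambda_j|$ (no optimization). For the bad parts, $w\bigl(\bigcup_j\{|f_j|>\mu_j\}\bigr)\le\sum_j\mu_j^{-p}=\lambda^{-p}\sum_j|\lambda_j|^p\le\lambda^{-p}$, exactly as in your step (4). For the good parts $g_j=\lambda_jf_j\chi_{\{|f_j|\le\mu_j\}}$, instead of asking that $\sum_j|g_j|\le\lambda$ pointwise, apply Chebyshev at level $\lambda$ together with the layer-cake formula:
\begin{equation*}
\int_{\mathbb R^n}|g_j(x)|\,w(x)\,dx=\int_0^{\lambda}w\bigl(\{|\lambda_jf_j|>t,\ |\lambda_jf_j|\le\lambda\}\bigr)\,dt\le\int_0^{\lambda}\Bigl(\frac{|\lambda_j|}{t}\Bigr)^{p}dt=\frac{|\lambda_j|^p\lambda^{1-p}}{1-p},
\end{equation*}
where the integral converges precisely because $p<1$. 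Summing in $j$ and dividing by $\lambda$ gives $w(\{\sum_j|g_j|>\lambda\})\le\lambda^{-p}/(1-p)$, and adding the two contributions yields $\lambda^p\,w(\{|\sum_j\lambda_jf_j|>\lambda\})\le 1+\frac1{1-p}=\frac{2-p}{1-p}$. Your remarks that only countable subadditivity of $w\,dx$ is used (so $A_1$ is not really needed) and that pointwise convergence of the series is not an issue for the distributional estimate are both correct; but as written the proposal does not contain a proof, and its central mechanism cannot be completed.
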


\begin{proof}
The proof of this lemma is similar to the corresponding result for the unweighted case. See [8, page 123].
\end{proof}

We are now in a position to give the proof of Theorem 1.
\begin{proof}
We note that when $w\in A_1$ and $p=n/{(n+\alpha)}$, then $[n({q_w}/p-1)]=[\alpha]=0$. By Lemma 4.1 and Theorem D, it suffices to show that for any $w$-$(p,q,0)$-atom $a$, there exists a constant $C>0$ independent of $a$ such that $\|\mu_\Omega(a)\|_{WL^p_w}\le C$.

Let $a$ be a $w$-$(p,q,0)$-atom with $supp\, a\subseteq Q=Q(x_0,r_Q)$, $1<q<\infty$ and let $Q^*=2\sqrt nQ$. For any given $\lambda>0$, we can write
\begin{equation*}
\begin{split}
&\lambda^p\cdot w(\{x\in\mathbb R^n:|\mu_\Omega(a)(x)|>\lambda\})\\
\le\,&\lambda^p\cdot w(\{x\in Q^*:|\mu_\Omega(a)(x)|>\lambda\})+\lambda^p\cdot w(\{x\in(Q^*)^c:|\mu_\Omega(a)(x)|>\lambda\})\\
=\,&I_1+I_2.
\end{split}
\end{equation*}
Since $w\in A_1$, then $w\in A_q$ for $1<q<\infty$. Applying Chebyshev's inequality, H\"older's inequality, Lemma B and Theorem A, we thus have
\begin{equation*}
\begin{split}
I_1&\le\int_{Q^*}\big|\mu_\Omega(a)(x)\big|^pw(x)\,dx\\
&\le\Big(\int_{Q^*}\big|\mu_\Omega(a)(x)\big|^qw(x)\,dx\Big)^{p/q}\Big(\int_{Q^*}w(x)\,dx\Big)^{1-p/q}\\
&\le \|\mu_\Omega(a)\|^p_{L^q_w}w(Q)^{1-p/q}\\
&\le C\cdot\|a\|^p_{L^q_w}w(Q)^{1-p/q}\\
&\le C.
\end{split}
\end{equation*}
We now turn to estimate $I_2$. If we set $\varphi(x)=\Omega(x)|x|^{-n+1}\chi_{\{|x|\le1\}}(x)$, then
$$\mu_\Omega(f)(x)=\left(\int_0^\infty\big|\varphi_t*f(x)\big|^2\frac{dt}{t}\right)^{1/2}.$$
By the vanishing moment condition of atom $a$, we have
\begin{equation*}
\begin{split}
\big|\varphi_t*a(x)\big|=\,&\frac{1}{\,t\,}\cdot\left|\int_Q\Big(\frac{\Omega(x-y)}{|x-y|^{n-1}}-\frac{\Omega(x-x_0)}{|x-x_0|^{n-1}}\Big)a(y)\,dy\right|\\
\le\,&C\cdot\frac{1}{\,t\,}\int_Q\Big|\frac{1}{|x-y|^{n-1}}-\frac{1}{|x-x_0|^{n-1}}\Big||a(y)|\,dy\\
&+\frac{1}{\,t\,}\int_Q\frac{|\Omega(x-y)-\Omega(x-x_0)|}{|x-x_0|^{n-1}}|a(y)|\,dy\\
=\,&\mbox{\upshape I+II}.
\end{split}
\end{equation*}
Observe that when $y\in Q$, $x\in(Q^*)^c$, then $|x-y|\sim|x-x_0|$. This together with the mean value theorem gives
\begin{equation}
\mbox{\upshape I}\le C\cdot\frac{r_Q}{t|x-x_0|^{n}}\int_Q|a(y)|\,dy.
\end{equation}
On the other hand, since $\Omega\in Lip_\alpha(S^{n-1})$, $0<\alpha<1$, then we can get
\begin{equation}
\begin{split}
\mbox{\upshape II}&\le C\cdot\frac{1}{t|x-x_0|^{n-1}}\int_Q\Big|\frac{x-y}{|x-y|}-\frac{x-x_0}{|x-x_0|}\Big|^\alpha|a(y)|\,dy\\
&\le C\cdot\frac{1}{t|x-x_0|^{n-1}}\int_Q\Big(\frac{|y-x_0|}{|x-x_0|}\Big)^\alpha|a(y)|\,dy\\
&\le C\cdot\frac{(r_Q)^\alpha}{t|x-x_0|^{n-1+\alpha}}\int_Q|a(y)|\,dy.
\end{split}
\end{equation}
By using H\"older's inequality and the $A_q$ condition, we thus obtain
\begin{equation}
\begin{split}
\int_Q\big|a(y)\big|\,dy&\le\left(\int_Q\big|a(y)\big|^qw(y)\,dy\right)^{1/q}\left(\int_Q w(y)^{-{q'}/q}\,dy\right)^{1/q'}\\
&\le C\cdot\|a\|_{L^q_w}\left(\frac{|Q|^q}{w(Q)}\right)^{1/q}\\
&\le C\cdot\frac{|Q|}{w(Q)^{1/p}}.
\end{split}
\end{equation}
We also observe that $supp \,\varphi\subseteq\{x\in\mathbb R^n:|x|\le1\}$, then for any $y\in Q$, $x\in(Q^*)^c$, we have $t\ge|x-y|\ge|x-x_0|-|y-x_0|\ge\frac{|x-x_0|}{2}$. Substituting the above inequality (3) into (1) and (2), we can deduce
\begin{equation*}
\begin{split}
\big|\mu_\Omega(a)(x)\big|^2&\le C\Big(\frac{r_Q^{n+1}}{|x-x_0|^nw(Q)^{1/p}}+\frac{r_Q^{n+\alpha}}{|x-x_0|^{n-1+\alpha}w(Q)^{1/p}}\Big)^2\Big(\int_{\frac{|x-x_0|}{2}}^\infty\frac{dt}{t^3}\Big)\\
&\le C\Big(\frac{r_Q^{n+1}}{|x-x_0|^{n+1}w(Q)^{1/p}}+\frac{r_Q^{n+\alpha}}{|x-x_0|^{n+\alpha}w(Q)^{1/p}}\Big)^2\\
&\le C\Big(\frac{1}{w(Q)^{1/p}}\Big)^2.
\end{split}
\end{equation*}
Set $Q^*_0=Q$, $Q^*_1=Q^*$ and $Q^*_k=(Q^*_{k-1})^*, k=2,3,\ldots.$ Following along the same lines as above, we can also show that for any $x\in(Q^*_k)^c$, then
\begin{equation*}
\big|\mu_\Omega(a)(x)\big|\le C\cdot\frac{1}{w(Q^*_{k-1})^{1/p}}\quad k=1,2,\ldots.
\end{equation*}
We shall consider the following two cases:

If $\{x\in(Q^*)^c:|\mu_\Omega(a)(x)|>\lambda\}=\O$, then the inequality
$$I_2\le C$$
holds trivially.

If $\{x\in(Q^*)^c:|\mu_\Omega(a)(x)|>\lambda\}\neq\O$, then for $p=n/{(n+\alpha)}$, it is easy to check that $$\lim_{k\to\infty}\frac{1}{w(Q^*_k)^{1/p}}=0.$$
Consequently, for any fixed $\lambda>0$, we are able to find a maximal positive integer $N$ such that
$$\lambda<C\cdot\frac{1}{w(Q^*_N)^{1/p}}.$$
Therefore
\begin{equation*}
\begin{split}
I_2&\le \lambda^p\cdot\sum_{k=1}^N w\big(\{x\in Q^*_{k+1}\backslash Q^*_k:|\mu_\Omega(a)(x)|>\lambda\}\big)\\
&\le C\cdot\frac{1}{w(Q^*_N)}\sum_{k=1}^N w(Q^*_{k+1})\\
&\le C.
\end{split}
\end{equation*}
Combining the above estimates for $I_1$, $I_2$ and taking the supremum over all $\lambda>0$, we complete the proof of Theorem 1.
\end{proof}

\section{Proof of Theorem 2}

\begin{proof}
We note that our assumption $\alpha=n(1-1/q)+\beta$ implies that $s=[\alpha+n(1/q-1)]=[\beta]=0$. For every $f\in H\dot K^{\alpha,p}_q(w_1,w_2)$, then by Theorem E, we have the decomposition $f=\sum_{j\in\mathbb Z}\lambda_ja_j,$ where $\sum_{j\in\mathbb Z}|\lambda_j|^p<\infty$ and each $a_j$ is a central $(\alpha,q,0;w_1,w_2)$-atom. Without loss of generality, we may assume that $supp\,a_j\subseteq B(0,R_j)$ and $R_j=2^j$. For any given $\sigma>0$, we write
\begin{equation*}
\begin{split}
&\sigma^p\cdot\sum_{k\in\mathbb Z}w_1(B_k)^{{\alpha p}/n}w_2\big(\{x\in C_k:|\mu_\Omega(f)(x)|>\sigma\}\big)^{p/q}\\
\le\,&\sigma^p\cdot\sum_{k\in\mathbb Z}w_1(B_k)^{{\alpha p}/n}w_2\big(\{x\in C_k:\sum_{j=k-1}^\infty|\lambda_j||\mu_\Omega(a_j)(x)|>\sigma/2\}\big)^{p/q}\\
&+\sigma^p\cdot\sum_{k\in\mathbb Z}w_1(B_k)^{{\alpha p}/n}w_2\big(\{x\in C_k:\sum_{j=-\infty}^{k-2}|\lambda_j||\mu_\Omega(a_j)(x)|>\sigma/2\}\big)^{p/q}\\
=\,&J_1+J_2.
\end{split}
\end{equation*}
Since $w_2\in A_1$, then $w_2\in A_q$ for any $1<q<\infty$. Note that $0<p\le1$, then by using Chebyshev's inequality and Theorem A, we can get
\begin{equation*}
\begin{split}
J_1&\le2^p\sum_{k\in\mathbb Z}w_1(B_k)^{{\alpha p}/n}\bigg(\sum_{j=k-1}^\infty|\lambda_j|\|\mu_\Omega(a_j)\chi_k\|_{L^q_{w_2}}\bigg)^p\\
&\le 2^p\sum_{k\in\mathbb Z}w_1(B_k)^{{\alpha p}/n}\bigg(\sum_{j=k-1}^\infty|\lambda_j|^p\|\mu_\Omega(a_j)\|^p_{L^q_{w_2}}\bigg)\\
&\le C\sum_{k\in\mathbb Z}w_1(B_k)^{{\alpha p}/n}\bigg(\sum_{j=k-1}^\infty|\lambda_j|^p\|a_j\|^p_{L^q_{w_2}}\bigg).
\end{split}
\end{equation*}
Changing the order of summation yields
\begin{equation*}
J_1\le C\sum_{j\in\mathbb Z}|\lambda_j|^p\bigg(\sum_{k=-\infty}^{j+1}w_1(B_k)^{{\alpha p}/n}w_1(B_j)^{-{\alpha p}/n}\bigg).
\end{equation*}
When $k\le j+1$, then $B_k\subseteq B_{j+1}$. Since $w_1\in A_1$, then we know $w\in RH_r$ for some $r>1$. It follows directly from Lemma C that
\begin{equation}
w_1(B_k)\le C\cdot w_1(B_{j+1})|B_k|^{\delta}|B_{j+1}|^{-\delta},
\end{equation}
where $\delta=(r-1)/r>0$. By Lemma B and the above inequality (4), we get
\begin{equation*}
\begin{split}
&\sum_{k=-\infty}^{j+1}w_1(B_k)^{{\alpha p}/n}w_1(B_j)^{-{\alpha p}/n}\\
\le\,& C\sum_{k=-\infty}^{j+1}\Big(\frac{w_1(B_{j+1})}{w_1(B_j)}\Big)^{{\alpha p}/n}\Big(\frac{|B_k|}{|B_{j+1}|}\Big)^{{\alpha\delta p}/n}\\
\le\,& C\sum_{k=-\infty}^{j+1}2^{(k-j-1)\alpha\delta p}\\
\le\,& C\sum_{k=0}^{\infty}2^{-k\alpha\delta p},
\end{split}
\end{equation*}
where the last series is convergent since $\alpha\delta p>0$. Furthermore, it is bounded by a constant which is independent of $j\in\mathbb Z$. Hence
\begin{equation*}
J_1\le C\sum_{j\in\mathbb Z}|\lambda_j|^p\le C\|f\|^p_{H\dot K^{\alpha,p}_q(w_1,w_2)}.
\end{equation*}
We turn to deal with $J_2$. As in the proof of Theorem 1, we can also write
\begin{equation*}
\begin{split}
\big|\varphi_t*a_j(x)\big|=\,&\frac{1}{\,t\,}\cdot\left|\int_{B_j}\Big(\frac{\Omega(x-y)}{|x-y|^{n-1}}-\frac{\Omega(x)}{|x|^{n-1}}\Big)a_j(y)\,dy\right|\\
\le\,&C\cdot\frac{1}{\,t\,}\int_{B_j}\Big|\frac{1}{|x-y|^{n-1}}-\frac{1}{|x|^{n-1}}\Big||a_j(y)|\,dy\\
&+\frac{1}{\,t\,}\int_{B_j}\frac{|\Omega(x-y)-\Omega(x)|}{|x|^{n-1}}|a_j(y)|\,dy\\
=\,&\mbox{\upshape III+IV}.
\end{split}
\end{equation*}
Observe that when $j\le k-2$, then for any $y\in B_j$ and $x\in C_k=B_k\backslash B_{k-1}$, we have $|x|\ge2|y|$, which implies $|x-y|\sim|x|$. We also observe that $supp\,\varphi\subseteq\{x\in\mathbb R^n:|x|\le1\}$, then we can get $t\ge|x-y|\ge\frac{|x|}{2}$. Hence, by using the same arguments as that of Theorem 1, we obtain
\begin{equation}
\mbox{\upshape III}\le C\cdot\frac{R_{j}}{t|x|^{n}}\int_{B_j}|a_j(y)|\,dy
\end{equation}
and
\begin{equation}
\mbox{\upshape IV}\le C\cdot\frac{(R_j)^\beta}{t|x|^{n-1+\beta}}\int_{B_j}|a_j(y)|\,dy.
\end{equation}
Similarly, it follows from H\"older's inequality and the $A_q$ condition that
\begin{equation}
\begin{split}
\int_{B_j}\big|a_j(y)\big|\,dy&\le\Big(\int_{B_j}\big|a_j(y)\big|^qw_2(y)\,dy\Big)^{1/q}\Big(\int_{B_j}w_2(y)^{-{q'}/q}\,dy\Big)^{1/{q'}}\\
&\le C\cdot|B_j|w_1(B_j)^{-\alpha/n}w_2(B_j)^{-1/q}.
\end{split}
\end{equation}
Substituting the above inequality (7) into (5) and (6), we can deduce
\begin{align}
&\big|\mu_\Omega(a_j)(x)\big|^2\notag\\
\le\,& C\Big(\frac{2^{j(n+1)}}{|x|^nw_1(B_j)^{\alpha/n}w_2(B_j)^{1/q}}+\frac{2^{j(n+\beta)}}{|x|^{n-1+\beta}w_1(B_j)^{\alpha/n}w_2(B_j)^{1/q}}\Big)^2\Big(\int_{\frac{|x|}{2}}^\infty\frac{dt}{t^3}\Big)\notag\\
\le\,& C\Big(\frac{2^{j(n+1)}}{|x|^{n+1}w_1(B_j)^{\alpha/n}w_2(B_j)^{1/q}}+\frac{2^{j(n+\beta)}}{|x|^{n+\beta}w_1(B_j)^{\alpha/n}w_2(B_j)^{1/q}}\Big)^2.
\end{align}
Since $B_j\subseteq B_{k-2}$, then by using Lemma C, we get
$$w_i(B_j)\ge C\cdot w_i(B_{k-2})|B_j||B_{k-2}|^{-1}\quad \mbox{for}\;\, i=1 \;\,\mbox{or}\;\, 2.$$
From our assumption $\alpha=n(1-1/q)+\beta$ and (8), it follows that
\begin{equation}
\begin{split}
\big|\mu_\Omega(a_j)(x)\big|&\le C\cdot\Big(\frac{2^j}{2^{k-2}}\Big)^{n+\beta-\alpha-n/q}\frac{1}{w_1(B_{k-2})^{\alpha/n}w_2(B_{k-2})^{1/q}}\\
&\le C\cdot\frac{1}{w_1(B_{k-2})^{\alpha/n}w_2(B_{k-2})^{1/q}}.
\end{split}
\end{equation}
We now set $A_k=w_1(B_{k-2})^{-\alpha/n}w_2(B_{k-2})^{-1/q}$. Once again, let us consider the following two cases:

If $\{x\in C_k:\sum_{j=-\infty}^{k-2}|\lambda_j||\mu_\Omega(a_j)(x)|>\sigma/2\}=\O$, then the inequality
$$J_2\le C\|f\|^p_{H\dot K^{\alpha,p}_q(w_1,w_2)}$$
holds trivially.

If $\{x\in C_k:\sum_{j=-\infty}^{k-2}|\lambda_j||\mu_\Omega(a_j)(x)|>\sigma/2\}\neq\O$, then by the inequality (9), we have
\begin{equation*}
\begin{split}
\sigma&< C\cdot A_k\Big(\sum_{j\in\mathbb Z}|\lambda_j|\Big)\\
&\le C\cdot A_k\Big(\sum_{j\in\mathbb Z}|\lambda_j|^p\Big)^{1/p}\\
&\le C\cdot A_k\|f\|_{H\dot K^{\alpha,p}_q(w_1,w_2)}.
\end{split}
\end{equation*}
In addition, it is easy to verify that $\lim_{k\to\infty}A_k=0$. Then for any given $\sigma>0$, we are able to find a maximal positive integer $k_\sigma$ such that
$$\sigma<C\cdot A_{k_\sigma}\|f\|_{H\dot K^{\alpha,p}_q(w_1,w_2)}.$$
From the above discussion, we have that $B_{k-2}\subseteq B_{k_\sigma-2}$. As (4), by using Lemma C again, we obtain
\begin{equation*}
\frac{w_i(B_{k-2})}{w_i(B_{k_\sigma-2})}\le C\Big(\frac{|B_{k-2}|}{|B_{k_\sigma-2}|}\Big)^\delta\quad \mbox{for}\;\, i=1 \;\,\mbox{or}\;\, 2.
\end{equation*}
Furthermore, it follows immediately from Lemma B that
\begin{equation*}
\frac{w_i(B_k)}{w_i(B_{k_\sigma-2})}\le C\Big(\frac{|B_{k-2}|}{|B_{k_\sigma-2}|}\Big)^\delta\quad \mbox{for}\;\, i=1 \;\,\mbox{or}\;\, 2.
\end{equation*}
Therefore
\begin{equation*}
\begin{split}
J_2&\le\sigma^p\cdot\sum_{k=-\infty}^{k_\sigma}w_1(B_k)^{{\alpha p}/n}w_2(B_k)^{p/q}\\
\end{split}
\end{equation*}
\begin{equation*}
\begin{split}
&\le C\|f\|^p_{H\dot K^{\alpha,p}_q(w_1,w_2)}\sum_{k=-\infty}^{k_\sigma}\Big(\frac{w_1(B_k)}{w_1(B_{k_\sigma-2})}\Big)^{{\alpha p}/n}\Big(\frac{w_2(B_k)}{w_2(B_{k_\sigma-2})}\Big)^{p/q}\\
&\le C\|f\|^p_{H\dot K^{\alpha,p}_q(w_1,w_2)}\sum_{k=-\infty}^{k_\sigma}\frac{1}{2^{(k_\sigma-k)n\delta}}\\
&\le C\|f\|^p_{H\dot K^{\alpha,p}_q(w_1,w_2)}.
\end{split}
\end{equation*}
Finally, by combining the above estimates for $J_1$, $J_2$ and taking the supremum over all $\sigma>0$, we conclude the proof of Theorem 2.
\end{proof}

\end{document}